\documentclass[12pt]{article}

\usepackage{amssymb}%
\usepackage{amsmath}%
\usepackage{amsthm}%
\usepackage{xcolor}%

\allowdisplaybreaks%

{\theoremstyle{plain}%
 \newtheorem{theorem}{Theorem}

}
{\theoremstyle{remark}

}
{\theoremstyle{definition}

}

\begin{document}

\begin{center}
{\Large Extensions of the truncated pentagonal number theorem}

 \ 

{\textsc{John M. Campbell}} 

 \ 

\end{center}

\begin{abstract}
 Andrews and Merca introduced and proved a $q$-series expansion for the partial sums of the $q$-series in Euler's pentagonal number 
 theorem. Kolitsch, in 2022, introduced a generalization of the Andrews--Merca identity via a finite sum expression for $ \sum_{n 
 \geq k} \frac{ q^{ (k +m) n } }{ \left( q; q \right)_{n} } \left[ \begin{smallmatrix} n - 1 \\ k - 1 \end{smallmatrix} \right]_{q}$ for positive 
 integers $m$, and Yao also proved an equivalent evaluation for this $q$-series in 2022, and Schlosser and Zhou extended this result 
 for complex values $m$ in 2024, with the $m = 1$ case yielding the Andrews--Merca identity, and with the $m = 2$ case having been 
 proved separately by Xia, Yee, and Zhao. We introduce and apply a method, based on the $q$-version of Zeilberger's algorithm, that 
 may be used to obtain finite sum expansions for $q$-series of the form $ \sum_{n \geq 1} \frac{ q^{ p(k) n } }{ \left( q; q \right)_{n + 
 \ell_2} } \left[ \begin{smallmatrix} n - \ell_{1} \\ k - 1 \end{smallmatrix} \right]_{q} $ for linear polynomials $p(k)$ and $\ell_{1} \in 
 \mathbb{N}$ and $\ell_{2} \in \mathbb{N}_{0}$, thereby generalizing the Andrews--Merca identity and the Kolitsch, Yao, and 
 Schlosser--Zhou identities. For example, the $(p(k), \ell_1, \ell_2) = (k+1, 2, 0)$ case provides a new truncation identity for Euler's 
 pentagonal number theorem. 
 \end{abstract}

\noindent {\footnotesize \emph{MSC:} 05A17, 11B65}

\vspace{0.1in}

\noindent {\footnotesize \emph{Keywords:} $q$-series, pentagonal number theorem, theta series, $q$-binomial coefficient, 
 $q$-difference equation, truncated partition identity} 

\section{Introduction}
 Let the \emph{$q$-Pochhammer symbol} be such that $(a;q)_{n} = (1 - a) (1 - aq) (1 - a q^2) \cdots (1 - a q^{n-1})$ for a 
 nonnegative integer $n$, and with $(a;q)_{\infty} = \prod_{k=0}^{\infty} (1 - a q^{k})$. One of the most fundamental results within the 
 areas of mathematics related to integer partitions is given by \emph{Euler's pentagonal number theorem}, which can be formulated 
 so that 
\begin{equation}\label{displayEuler}
 (q;q)_{\infty } = 1 + \sum_{n=1}^{\infty} (-1)^{n} q^{\frac{n(3n+1)}{2}} \left( 1 + q^{-n} \right). 
\end{equation}
 We find that this is equivalent to 
\begin{equation}\label{equivEuler}
 (q;q)_{\infty } = \sum_{n = 0}^{\infty} (-1)^{n} q^{\frac{n(3n+1)}{2}} \left( 1 - q^{2n+1} \right), 
\end{equation}
 by rewriting $\sum _{n=0}^m (-1)^n q^{\frac{n (3 n+1)}{2}} \left(q^{-n}+q^{2 n+1}\right) $ as $ 1+(-1)^m q^{\frac{(m+1) (3 m + 
 2)}{2}}$. A remarkable result due to Andrews and Merca \cite{AndrewsMerca2012} provides a $q$-series expansion for a truncated 
 version of \eqref{equivEuler}. This leads us to consider the development of techniques to extend the Andrews--Merca 
 truncation identity. 

 \emph{Gaussian binomial coefficients} or \emph{$q$-binomial coefficients} may be defined so that 
 \[ \left[ \begin{matrix} n \\ k \end{matrix} \right]_{q} = \begin{cases} 
 \frac{ \left( q; q \right)_{n} }{ \left( q;q \right)_{k} 
 \left( q;q \right)_{n-k} } & \text{if $0 \leq k \leq n$,} \\ 
 0 & \text{otherwise}.
 \end{cases} \] 
 The main result from the work of Andrews and Merca \cite{AndrewsMerca2012} on the truncation of the Euler pentagonal number 
 theorem is given by the $q$-identity such that $$ \frac{1}{ \left( q; q \right)_{\infty} } \sum_{j=0}^{k-1} (-1)^j q^{\frac{j(3 j + 
 1)}{2}} \left( 1 - q^{2j+1} \right) = 1 + (-1)^{k-1} \sum_{n = 1}^{\infty} \frac{ q^{ \binom{k}{2} + (k + 1) n } }{ \left( q;q \right)_{n} } 
 \left[ \begin{matrix} n - 1 \\ k - 1 \end{matrix} \right]_{q}. $$ This Andrews--Merca identity has been influential in many areas of 
 combinatorics and number theory, and has been generalized in a number of ways. In this direction, a notable generalization was proved 
 in 2022 by Kolitsch \cite[Theorem 2]{Kolitsch2022} and is such that 
\begin{multline*}
 \frac{ \left( q^{k+1}; q \right)_{m-1} }{ \left( q;q \right)_{\infty} } 
 \sum_{j=0}^{k-1} (-1)^{j} q^{(k + m) j + \binom{j}{2} } \left( q^{j+1}; q \right)_{k - j} = \\ 
 1 + (-1)^{k-1} \sum_{n=k}^{\infty} \frac{ q^{\binom{k}{2} + (k +m) n } }{ \left( q; q \right)_{n} } 
 \left[ \begin{matrix} n - 1 \\ k - 1 \end{matrix} \right]_{q} 
\end{multline*}
 for positive integers $k$ and $m$. The equivalent identity such that 
\begin{multline*}
 \frac{1}{ \left( q;q \right)_{\infty} } 
 \sum_{j=0}^{k-1} (-1)^j q^{\frac{j(3j+2m-1)}{2}} 
 \left( q^{j+1};q \right)_{m-1} \left( 1 - q^{2j+m} \right) = \\ 
 1 + (-1)^{k-1} \sum_{n=k}^{\infty} \frac{ q^{\binom{k}{2} + (k + m) n } }{ \left( q; q \right)_{n} } 
 \left[ \begin{matrix} n - 1 \\ k - 1 \end{matrix} \right]_{q} 
\end{multline*}
 was also proved independently by Yao in 2022 \cite[Theorem 1.2]{Yao2022}. This was generalized in 2024 by 
 Schlosser and Zhou \cite[Corollary 1.8]{SchlosserZhou2024} for complex $m$. 
 Moreover, the equivalent version 
\begin{multline*}
 \frac{1}{ \left( q;q \right)_{\infty} } 
 \sum_{j=0}^{k-1} (-1)^{j} q^{\frac{3j(j+1)}{2}} 
 \left( 1 - q^{j+1} \right) \left( 1 - q^{2j+2} \right) = \\ 
 1 + (-1)^{k-1} \sum_{n = 1}^{\infty} \frac{ q^{\binom{k}{2} + (k + 2) n } }{ \left( q; q \right)_{n} } 
 \left[ \begin{matrix} n - 1 \\ k - 1 \end{matrix} \right]_{q} 
\end{multline*}
   of the $m = 2$ case of the above Kolitsch identity and the above Yao identity was highlighted as a main result in    a 2022 contribution   
   due to Xia, Yee, and Zhao \cite{XiaYeeZhao2022} and     as providing a truncation of an equivalent version of the Euler pentagonal number   
   theorem.     The truncation identities above     and many further results related to the Andrews--Merca truncation   
  \cite{AndrewsMerca2018,GuoZeng2013,KolitschBurnette2015,Mao2015,WangYee2019,Xia2022,Yee2015} motivate further       
    generalizations of this truncation.   

 The Kolitsch identity, the Yao identity, the Schlosser--Zhou identity, and the Xia--Yee--Zhao identity raise questions as to how 
 expressions of the form 
\begin{equation}\label{mainobject}
 1 + (-1)^{k-1} \sum_{n = 1}^{\infty} \frac{ q^{\binom{k}{2} + \rho(k) \, 
 n } }{ \left( q; q 
 \right)_{n + \ell_2} } \left[ \begin{matrix} n - \ell_1 \\ k - 1 \end{matrix} \right]_{q} 
\end{equation}
 could be expressed with truncated $q$-sums, for a linear polynomial $\rho(k)$ 
 and for $\ell_{1} \in \mathbb{N}$ and for $\ell_{2} \in \mathbb{N}_{0}$. 
 This provides the main purpose of our paper. 
 For $\ell_1 = 0$, series of the form indicated in \eqref{mainobject} 
 can often be reduced to classical results on basic hypergeometric series, and hence our 
 disregarding this case, and similarly for the $\ell_1, \ell_2 < 0$ cases. 
 
 Observe that for $\ell_1 = 1$ and $\ell_2 = 0$, the $\rho(k) = k + 1$ and $\rho(k) = k + m$ and $\rho(k) = k + 2$ cases of 
 \eqref{mainobject} provide, respectively, the $q$-series in the Andrews--Merca identity, the Kolitsch and Yao and Schlosser--Zhou 
 identities, and the Xia--Yee--Zhao identity. Since $\rho(k)$ is monic in all of these cases, this leads us to consider the problem of 
 determining a finite sum expansion of \eqref{mainobject} for $\rho(k) = 2 k + m$ and $\rho(k) = 3 k + m$ and $\rho(k) = 4 k + 
 m$, as in Section \ref{sectionInfinite} 
 below. To begin with, we highlight, as in Section \ref{sectionNew} below, 
 new $q$-series for truncations of 
  equivalent versions of  the  $q$-series involved in 
 the Euler identity in \eqref{displayEuler}. 

\section{New truncated versions of the pentagonal number theorem}\label{sectionNew}
 Informally, we write $F(n, k)$ in place of the product of $(-1)^{k-1}$ and a variant of the summand of the $q$-series in the above 
 formulation of the Andrews--Merca identity (where this $q$-series is over $n \in \mathbb{N}$), such as the summand displayed in 
 \eqref{mainobject}, and we require $F(n, k)$ to be $q$-hypergeometric. We then set $\mathcal{F}(n, k) := F(k, n)$, and we apply the 
 $q$-version of Zeilberger's algorithm to $\mathcal{F}(n, k)$, after inputting 
\begin{verbatim}
with(QDifferenceEquations):
\end{verbatim}
 into the Maple Computer Algebra System. In exceptional cases, this results in a second-order difference equation, writing $$ p_{1}(n) 
 \mathcal{F}(n+2,k) + p_{2}(n) \mathcal{F}(n+1,k) + p_{3}(n) \mathcal{F}(n, k) = \mathcal{G}(n, k + 1) - \mathcal{G}(n, k) $$ for 
 $q$-polynomials $p_{1}(n)$, $p_{2}(n)$, and $p_{3}(n)$ and for a $q$-hypergeometric function $\mathcal{G}(n, k)$ such that 
 $ \mathcal{G}(n ,k) = \mathcal{R}(n, k) \mathcal{F}(n, k)$ for a $q$-rational function $\mathcal{R}(n, k)$. In exceptional cases, we 
 obtaining a vanishing expression on the right-hand side of the above difference equation, via a telescoping phenomenon from the 
 application of an infinite summation operator $\sum_{k} \cdot$ over a given index set, i.e., so that we obtain a recursion 
\begin{equation}\label{frecursion}
 p_{1}(k) f(k+2) + p_{2}(k) f(k+1) + p_{3}(k) f(k) = 0 
\end{equation} 
 for $f(k) = \sum_{n} F(n, k)$. In exceptional cases, the recurrence in \eqref{frecursion} can be solved explicitly as a finite sum, using 
 computer algebra tools as in the {\tt RSolve} command in the Wolfram Mathematica system. If the required evaluations for the base 
 cases can then be determined, this provides the desired finite sum evaluation for $\sum_{n} F(n, k)$. 

 As a natural variant of the $q$-series involved in the Andrews--Merca identity, we consider the $q$-series obtained by replacing the 
 $q$-binomial coefficient $ \left[ \begin{smallmatrix} n - 1 \\ k - 1 \end{smallmatrix} \right]_{q} $ with $ \left[ \begin{smallmatrix} n - 
 2 \\ k - 1 \end{smallmatrix} \right]_{q}$, noting that $ \left[ \begin{smallmatrix} n - 2 \\ k - 1 \end{smallmatrix} \right]_{q} = 
 \frac{1-q^{n-k}}{1-q^{n-1}} \left[ \begin{smallmatrix} n - 1 \\ k - 1 \end{smallmatrix} \right]_{q}$. We have applied out above method 
 to derive and prove the following result, which provides a new $q$-series expansion for a truncation of an equivalent version of the 
 series involved in Euler's pentagonal number theorem. 

\begin{theorem}\label{naturalreplace}
 The truncation identity 
\begin{multline*}
 \sum_{j=0}^{k-1} (-1)^j q^{\frac{j(3j-1)}{2}} \left( 1 + q^{j} \right) = 
 1 - \frac{ \left( -1 \right)^{k} q^{\frac{k(3k-1)}{2}} }{ 1 - q^{k} + q^{2k} } 
 + \\ \left( q;q \right)_{\infty} 
 \left( 1 - \frac{ (1-q) \left( -q \right)^{k} }{q \left( 1 - q^{k} + q^{2k} \right) } 
 \sum_{n = 2}^{\infty} \frac{ q^{\binom{k}{2} + (k + 1) n } }{ \left( q; q \right)_{n} 
 } \left[ \begin{matrix} n - 2 \\ k - 1 \end{matrix} \right]_{q} \right).
\end{multline*}
 holds for positive integers $k$. 
\end{theorem}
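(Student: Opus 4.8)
Write $T(k):=\sum_{j=0}^{k-1}(-1)^j q^{j(3j-1)/2}(1+q^j)$ for the finite sum on the left and
\[ f(k):=\sum_{n\geq 2}\frac{q^{\binom{k}{2}+(k+1)n}}{(q;q)_n}\left[\begin{matrix} n-2 \\ k-1 \end{matrix}\right]_q \]
for the $q$-series on the right; since $\left[\begin{smallmatrix} n-2 \\ k-1\end{smallmatrix}\right]_q$ vanishes for $n<k+1$, this is a well-defined formal power series in $q$. Dividing the asserted identity by $(q;q)_\infty$ and solving for $f(k)$, one sees that the theorem is equivalent to the closed-form evaluation $f(k)=\Phi(k)$, where
\[ \Phi(k):=-\frac{(-1)^k(1-q^k+q^{2k})}{(q;q)_\infty(1-q)q^{k-1}}\left(T(k)-1+\frac{(-1)^k q^{k(3k-1)/2}}{1-q^k+q^{2k}}-(q;q)_\infty\right). \]
The plan is to prove this by showing that $f$ and $\Phi$ satisfy one and the same second-order homogeneous $q$-difference equation in $k$ and that $f(1)=\Phi(1)$ and $f(2)=\Phi(2)$.

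To obtain a recurrence for $f$, I follow the method set out above. Put
\[ \mathcal{F}(n,k):=(-1)^{n-1}\,\frac{q^{\binom{n}{2}+(n+1)k}}{(q;q)_k}\left[\begin{matrix}k-2\\ n-1\end{matrix}\right]_q, \]
that is, the signed summand of $f$ with its two arguments interchanged, and apply the $q$-version of Zeilberger's algorithm from the {\tt QDifferenceEquations} package to produce $q$-polynomials $p_1(n),p_2(n),p_3(n)$ and a $q$-rational function $\mathcal{R}(n,k)$ for which $\mathcal{G}:=\mathcal{R}\,\mathcal{F}$ satisfies
\[ p_1(n)\,\mathcal{F}(n+2,k)+p_2(n)\,\mathcal{F}(n+1,k)+p_3(n)\,\mathcal{F}(n,k)=\mathcal{G}(n,k+1)-\mathcal{G}(n,k). \]
Once the $p_i$ and $\mathcal{R}$ are exhibited, this identity is verified by clearing denominators and checking the resulting polynomial identity in $q$, $q^n$, $q^k$. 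Summing over $k$, the right-hand side telescopes and the boundary contributions vanish by the support of $\left[\begin{smallmatrix}k-2\\n-1\end{smallmatrix}\right]_q$ and the decay $\mathcal{F}(n,k)\to 0$ as $k\to\infty$; since $\sum_k\mathcal{F}(n,k)=(-1)^{n-1}f(n)$, this yields a second-order homogeneous recurrence $P_1(k)f(k+2)+P_2(k)f(k+1)+P_3(k)f(k)=0$ for suitable $q$-polynomials $P_i$.

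It remains to check that $\Phi$ satisfies the same recurrence and to verify the base cases. Using $T(k+1)-T(k)=(-1)^k q^{k(3k-1)/2}(1+q^k)$ to reduce $T(k+1)$ and $T(k+2)$ to $T(k)$, and then separating $P_1(k)\Phi(k+2)+P_2(k)\Phi(k+1)+P_3(k)\Phi(k)$ according to its dependence on $T(k)$ and on $(q;q)_\infty$ (which is transcendental over $\mathbb{Q}(q)$), the vanishing of this expression reduces to two rational identities in $q$ and $q^k$, each of which is cleared to a polynomial identity. For the base cases, the $q$-exponential identity $\sum_{n\geq 0}z^n/(q;q)_n=1/(z;q)_\infty$ gives $f(1)=\sum_{n\geq 2}q^{2n}/(q;q)_n=1/(q^2;q)_\infty-1-q^2/(1-q)$, while writing $\left[\begin{smallmatrix}n-2\\1\end{smallmatrix}\right]_q=(1-q^{n-2})/(1-q)$ lets one express $f(2)$ as a $\mathbb{Q}(q)$-combination of the $q$-exponential series at $z=q^3$ and $z=q^4$; one then checks that $\Phi(1)$ and $\Phi(2)$ match these (the $k=1$ check reduces to $(q;q)_\infty=(1-q)(q^2;q)_\infty$). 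Since $f$ and $\Phi$ obey the same second-order recurrence and agree at $k=1$ and $k=2$, they coincide; as an alternative, one may solve the recurrence for $f$ directly with {\tt RSolve} and recognize $\Phi$ in the output, but I would include the direct verification above so that the proof is self-contained.

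The main obstacle I anticipate is computational bulk rather than any conceptual gap. For the parameter choice $(\rho(k),\ell_1,\ell_2)=(k+1,2,0)$, the certificate $\mathcal{R}$ and the recurrence coefficients $P_i$ are appreciably larger than in the Andrews--Merca case, so both the telescoping identity and the recurrence checks for $\Phi$ expand into sizeable $q$-polynomial identities. A secondary subtlety is the denominator factor $1-q^k+q^{2k}=(1+q^{3k})/(1+q^k)$: one must confirm it does not introduce a genuine pole --- the matching factor is present in the numerators delivered by the recurrence --- and must track the $q^2$-dependence of the prefactor carefully in the $k=2$ base case.
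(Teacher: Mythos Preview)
Your proposal is correct and follows essentially the same route as the paper: set up $\mathcal{F}(n,k)$ with the arguments swapped, run the $q$-Zeilberger algorithm to obtain a second-order homogeneous recurrence for $f(k)$, and pin it down with the base cases $f(1)$ and $f(2)$ computed from the $q$-exponential. The only cosmetic difference is that the paper solves the recurrence via {\tt RSolve} and then uses the partial-fraction identity
\[
\frac{1}{1-q}\left(\frac{1}{1-q^{j}+q^{2j}}-\frac{q}{1-q^{j+1}+q^{2j+2}}\right)=\frac{1-q^{2j+1}}{(1-q^{j}+q^{2j})(1-q^{j+1}+q^{2j+2})}
\]
to telescope the resulting sum into the stated form, whereas you instead substitute the target $\Phi(k)$ directly into the recurrence; the two verifications are equivalent.
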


\begin{proof}
 We set 
 $$ F(n, k) := (-1)^{k-1} \frac{ q^{\binom{k}{2} + (k + 1) n } }{ \left( q; q \right)_{n} 
 } \left[ \begin{matrix} n - 2 \\ k - 1 \end{matrix} \right]_{q} $$
 and $\mathcal{F}(n, k) := F(k, n)$. 
 Applying the $q$-Zeilberger algorithm 
 to $\mathcal{F}(n, k)$ yields a second-order difference equation of the desired form, 
 for the $q$-polynomials 
\begin{align*}
 p_{1}(n) & = q \left(1-q^{2 n+1}\right), \\ 
 p_{2}(n) & = -\left(1 + q^{n+1}\right) \left(1 - q^{n+2}-q^{2 n+1}+q^{2 n+4}-q^{3 n+4}+q^{4 n+4} \right), \\ 
 p_{3}(n) & = -q^{3 n+3} \left(1-q^{2 n+3}\right), 
\end{align*}
 and $\mathcal{G}(n, k) = \mathcal{R}(n, k) \mathcal{F}(n, k)$, for 
\begin{multline*}
 \mathcal{R}(n, k) = \frac{\left(1-q^k\right) \left(q^{n+1}-q^k\right)}{q \left(1-q^n\right) \left(1-q^{1+n}\right)}
 \big( q^{k+n+2}+q^{k+2 n+1}+q^{k+2 n+3} + q^{2 k+2 n+1} - \\ 
 q^{k+3 n+3}-q^{k+4 n+4}-q^{2 k}-q^k+q^{2 n+3}-q^{3 n+3}-q^{4 n+6}+q^{5 n+6} \big), 
\end{multline*}
 so that a telescoping phenomenon gives us, for 
 $f(k) = \sum_{n=2}^{\infty} F(n, k)$, that the $q$-difference equation in 
 \eqref{frecursion} holds. 
 Solving for the recurrence 
\begin{equation}\label{20727507407387077PM1A}
 p_{1}(k) g(k+2) + p_{2}(k) g(k+1) + p_{3}(k) g(k) = 0 
\end{equation} 
 for the same $q$-polynomials gives us that from \eqref{20727507407387077PM1A}, it follows that 
\begin{multline*}
 g(k) = \frac{1-q^k+q^{2 k}}{q^k} \Bigg( 
 c_{1} - c_{2} \frac{\left(1-q+q^2\right) \left(1-q^2+q^4\right)}{q^4 \left(1-q^3\right)} 
 \times \\ 
 \sum_{j=0}^{k-1} (-1)^{j} q^{ \frac{j (3 j+5)}{2} } 
 \frac{1-q^{2 j+1}}{\left(1-q^j+q^{2 j}\right) \left(1-q^{j+1}+q^{2 j+2}\right)} \Bigg) 
\end{multline*}
 for fixed $c_{1}$ and $c_{2}$. We find that 
\begin{align*}
 f(1) & = \sum_{n=2}^{\infty} \frac{ q^{2n} }{ \left( q;q \right)_{n} } 
 = \frac{1}{ \left( q^{2}; q \right)_{\infty} } - 1 - \frac{q^2}{1-q}
\end{align*}
 from a classical result in the theory of partitions 
 \cite[p.\ 19]{Andrews1998}, and we similarly find that 
\begin{align*}
 f(2) & = - \frac{q}{1-q} 
 \sum_{n=2}^{\infty} \frac{q^{3n}}{ \left( q;q \right)_{n} } \left( 1 - q^{n-2} \right) \\ 
 & = \frac{ (1-q) (1 + q) (1 - q^2 - q^3) }{q \, \left( q;q \right)_{\infty} } 
 - \frac{1 - q^2 + q^4}{ (1-q) q}. 
\end{align*}
 By then setting $f(k) = g(k)$, the above closed forms for $f(1)$ and $g(1)$
 allow us to closed for $c_{1}$ and $c_{2}$. By rewriting the 
 summand of the associated truncated sum according to the $q$-partial fraction decomposition
 $$ \frac{1}{1-q} \left( \frac{1}{1 - q^j + q^{2j}} - \frac{q}{1 - q^{j+1} + q^{2j+2}} \right) 
 = \frac{1 - q^{2j+1}}{ \left( 1 - q^j + q^{2j} \right) \left( 1 - q^{j + 1} + q^{2j+2} \right) } $$
 and by applying a reindexing argument and simplifying, we obtain an equivalent version of the desired result. 
\end{proof}

 Setting $k \to \infty$, we obtain that 
 $$ \sum_{j=0}^{k-1} (-1)^{j} q^{\frac{ j(3j-1) }{2}} \left( 1 + q^{j} \right) 
 = 1 + \left( q;q \right)_{\infty}, $$ 
 which is equivalent to the unilateral formulations of the Euler pentagonal number theorem in 
 \eqref{displayEuler} and \eqref{equivEuler}. 

 As another natural variant of the Andrews--Merca $q$-series,  we consider replacing the summand factor  $ \frac{1}{ \left( q;q 
 \right)_{n} } $ with  $\frac{1}{ \left( q;q \right)_{n + 1} }$,  noting that  $$\frac{1}{ \left( q;q \right)_{n+1} } = \frac{1}{ \left( 1 - q^{n  
 + 1} \right) \left( q;q \right)_{n} }. $$  With regard to our notation involved in Theorem \ref{anothernatural} below,  we are writing $ 
 [n]_{q} $ to denote the $q$-bracket symbol such that $[n]_{q} = \frac{1-q^n}{1-q}$. 
 A similar approach as in the proof of Theorem \ref{naturalreplace} can be employed 
 to prove the following result, 
 according to our $q$-Zeilberger-based method, by setting 
 $$ F(n, k) = (-1)^{k-1} \frac{ q^{\binom{k}{2} + (k + 1) n } }{ \left( q; q \right)_{n} 
 \left[ n + 1 \right]_{q} } \left[ \begin{matrix} n - 1 \\ k - 1 \end{matrix} \right]_{q} $$ 
 and $\mathcal{F}(n, k) := F(k, n)$ and by applying the $q$-Zeilberger 
 algorithm to $\mathcal{F}(n, k)$, yielding a second-order recurrence of the desired form. 

\begin{theorem}\label{anothernatural}
 The truncation identity 
\begin{multline*}
 \text{ {\footnotesize 
 $ q^{-k} - q^{-2k} + \frac{1 - q^{-k} + q^{-2k}}{ \left( q;q \right)_{\infty} } 
 \Bigg( \frac{ \left( -1 \right)^{k} q^{\frac{k(3k-1)}{2}} }{1 - q^{k} + q^{2k}} - 1 
 + \sum_{j=0}^{k-1} (-1)^{j} q^{\frac{j(3j-1)}{2}} \big( 1 + q^{j} \big) \Bigg) = 
 $ } }
 \\ 1 + 
 (-1)^{k-1} \sum_{n = 
 1}^{\infty} \frac{ q^{\binom{k}{2} + (k + 1) n } }{ \left( q; q \right)_{n} 
 \left[ n + 1 \right]_{q} } \left[ \begin{matrix} n - 1 \\ k - 1 \end{matrix} \right]_{q}.
\end{multline*}
 holds for positive integers $k$. 
\end{theorem}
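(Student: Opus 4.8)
The plan is to deduce Theorem~\ref{anothernatural} directly from the already-established Theorem~\ref{naturalreplace} by an elementary rewriting of the summand; a self-contained $q$-Zeilberger proof, along the lines of the proof of Theorem~\ref{naturalreplace}, is also available, and I would record it as an alternative.

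First I would use the identity $\frac{1}{(q;q)_{n}\,[n+1]_{q}} = \frac{1-q}{(q;q)_{n+1}}$, which is immediate from $[n+1]_{q} = \frac{1-q^{n+1}}{1-q}$. Applying this to the $q$-series on the right-hand side of Theorem~\ref{anothernatural}, and then reindexing by $m = n+1$, turns that $q$-series into $\frac{1-q}{q^{k+1}} \sum_{m \geq 2} \frac{q^{\binom{k}{2} + (k+1) m}}{(q;q)_{m}} \left[ \begin{smallmatrix} m - 2 \\ k - 1 \end{smallmatrix} \right]_{q}$, i.e., into a fixed scalar multiple of exactly the $q$-series occurring in Theorem~\ref{naturalreplace}. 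I would then solve the identity of Theorem~\ref{naturalreplace} for that $q$-series, substitute, and simplify, using $1 - q^{-k} + q^{-2k} = q^{-2k}(1 - q^{k} + q^{2k})$ and $(-q)^{k} = (-1)^{k} q^{k}$; the factor $(-1)^{k-1}$ standing in front of the $q$-series in the statement combines with the sign produced by these simplifications to give the overall sign in front of $(q;q)_{\infty}^{-1}$, and separating the $(q;q)_{\infty}$-free part from the $(q;q)_{\infty}$-divided part yields the claimed closed form. This is a short computation, and as a sanity check the $k=1$ case reduces on both sides to $q^{-1} - q^{-2} + \frac{(1-q)^{2}}{q^{2}(q;q)_{\infty}}$.

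As an alternative consistent with the method of the paper, one sets $\mathcal{F}(n,k) := F(k,n)$ for the function $F$ displayed just before the theorem, applies the $q$-version of Zeilberger's algorithm to obtain a second-order $q$-difference equation $p_{1}(n)\mathcal{F}(n+2,k) + p_{2}(n)\mathcal{F}(n+1,k) + p_{3}(n)\mathcal{F}(n,k) = \mathcal{G}(n,k+1) - \mathcal{G}(n,k)$ with explicit $q$-polynomials $p_{i}$ and certificate $\mathcal{G}(n,k) = \mathcal{R}(n,k)\mathcal{F}(n,k)$, sums over $k$ so that the right-hand side telescopes to zero and \eqref{frecursion} holds for $f(k) = \sum_{n \geq 1} F(n,k)$, solves this recursion in closed form (anticipating a general solution of the shape $(1 - q^{-k} + q^{-2k})$ times a linear combination of $1$ and a single finite $q$-sum, as in the proof of Theorem~\ref{naturalreplace}), evaluates the base cases $f(1) = \sum_{n \geq 1} \frac{q^{2n}}{(q;q)_{n}[n+1]_{q}}$ and $f(2)$ from Euler's identity $\sum_{m \geq 0} \frac{z^{m}}{(q;q)_{m}} = \frac{1}{(z;q)_{\infty}}$, pins down the two free constants, and simplifies back to the stated form.

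I expect the only real obstacle to be bookkeeping. In the first approach this means tracking the powers of $q$ and the signs through the reindexing and the solve-and-substitute step. In the second approach the point that needs genuine care is confirming that the telescoped right-hand side vanishes --- convergence of the sum over $k$ together with the vanishing of the boundary contributions of $\mathcal{G}$ --- and then matching the output of the recurrence solver to the stated closed form, which, as for Theorem~\ref{naturalreplace}, uses the $q$-partial fraction identity $\frac{1}{1-q}\bigl(\frac{1}{1-q^{j}+q^{2j}} - \frac{q}{1-q^{j+1}+q^{2j+2}}\bigr) = \frac{1-q^{2j+1}}{(1-q^{j}+q^{2j})(1-q^{j+1}+q^{2j+2})}$ followed by a reindexing. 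Since every one of these steps has a direct counterpart already executed in the proof of Theorem~\ref{naturalreplace}, I do not anticipate a genuine difficulty.
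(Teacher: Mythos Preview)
Your proposal is correct. Your second (alternative) approach is exactly the paper's: apply $q$-Zeilberger to $\mathcal{F}(n,k)=F(k,n)$, telescope, solve the resulting second-order recurrence, and fix constants via base cases. Your first approach, however, is a genuinely different and more elementary route that the paper does not take. The reindexing you describe is valid: with $m=n+1$ one has
\[
\sum_{n\ge 1}\frac{q^{\binom{k}{2}+(k+1)n}}{(q;q)_n[n+1]_q}\begin{bmatrix}n-1\\k-1\end{bmatrix}_q
=\frac{1-q}{q^{k+1}}\sum_{m\ge 2}\frac{q^{\binom{k}{2}+(k+1)m}}{(q;q)_m}\begin{bmatrix}m-2\\k-1\end{bmatrix}_q,
\]
so the right-hand side of Theorem~\ref{anothernatural} is a scalar multiple of the $q$-series in Theorem~\ref{naturalreplace}; solving Theorem~\ref{naturalreplace} for that series and substituting (using $1-q^{-k}+q^{-2k}=q^{-2k}(1-q^k+q^{2k})$) gives the claimed left-hand side after a two-line simplification, and your $k=1$ check is correct. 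What this buys you is that Theorem~\ref{anothernatural} becomes an immediate corollary of Theorem~\ref{naturalreplace} with no second computer-algebra pass, no new certificate, and no separate base-case evaluations; the cost is that it depends on Theorem~\ref{naturalreplace} rather than standing alone, whereas the paper's direct $q$-Zeilberger argument treats both theorems uniformly within the same method.
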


  According to the notation in \eqref{mainobject}, Theorem \ref{naturalreplace} corresponds to the $(p(k)$, $ \ell_1$, $ \ell_2)$ $ = $ 
 $ (k + 1$, $ 2$, $ 0)$ case,  and Theorem \ref{anothernatural} corresponds to the $(p(k), \ell_1, \ell_2) = (k + 1, 1, 1)$ case. We may 
 apply our method to obtain many further $q$-series expansions for truncated versions of the $q$-series in Euler's pentagonal number 
 theorem,  and we encourage explorations of this. 
 Based on extant literature related to the Kolitsch identity and 
 the Yao identity and the Schlosser--Zhou identity 
 \cite{ChenYao2025combinatorial,ChenYao2025Proofs,ChernXia2024,HeLiu2025,Males2024,
SchlosserZhou2024,Yao2024,Yao2025Ballantine,Yao2023,Yao2022,Zhou2024}, 
 it appears that the problem of evaluating \eqref{mainobject}
 as a truncated sum for non-monic polynomials $p(k)$ has not previously been considered, 
 leading to Section \ref{sectionInfinite} below. 

\section{Infinite families of truncations}\label{sectionInfinite}
 Our method, as summarized in Section \ref{sectionNew}, 
 can be used to obtain new proofs 
 for the Andrews--Merca identity and many subsequent identities on truncated theta series, 
 including the Kolitsch identity, the Yao identity, and the 
 Schlosser--Zhou identity. This shows how our method provides a unifying framework 
 for proving and extending truncated theta series identities, 
 and this is demonstrated with our proof of the new result highlighted below. 

\begin{theorem}\label{theoreminfinite}
 For the $q$-polynomial $p(j) = 1 -q^{3 j+m}-q^{4 j+m+1}+q^{6 j + 2 m + 1}$, the truncation identity 
\begin{multline*}
 \frac{ 1 }{ \left( q^{m};q \right)_{\infty} } 
 \sum_{j = 0}^{k - 1} 
 (-1)^{j} q^{\frac{j(5j+2m-1)}{2}} \frac{ \left( q^{m}; q^2 \right)_{j} 
 \left( q^{m+1};q^2 \right)_{j} }{ \left( q;q \right)_{j} } 
 \, p(j) = \\ 
 1 + (-1)^{k - 1} \sum_{n = 
 1}^{\infty} \frac{ q^{\binom{k}{2} + 
 (2 k + m) n } }{ \left( q; q \right)_{n} } \left[ \begin{matrix} n - 1 \\ k - 1 \end{matrix} \right]_{q} 
\end{multline*}
 holds for positive integers $k$ and for $m \in \mathbb{C}$. 
\end{theorem}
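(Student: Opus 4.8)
The plan is to apply the $q$-Zeilberger-based method described in Section~\ref{sectionNew}, following the template set by the proof of Theorem~\ref{naturalreplace}. First I would put
\[ F(n, k) := (-1)^{k-1} \frac{ q^{ \binom{k}{2} + (2 k + m) n } }{ \left( q; q \right)_{n} } \left[ \begin{matrix} n - 1 \\ k - 1 \end{matrix} \right]_{q}, \]
set $\mathcal{F}(n, k) := F(k, n)$, and run the $q$-analogue of Zeilberger's algorithm (from the \texttt{QDifferenceEquations} package in Maple) on $\mathcal{F}(n, k)$. The expectation --- to be confirmed by the computation --- is that this produces a genuine second-order $q$-difference equation $p_{1}(n) \mathcal{F}(n+2, k) + p_{2}(n) \mathcal{F}(n+1, k) + p_{3}(n) \mathcal{F}(n, k) = \mathcal{G}(n, k+1) - \mathcal{G}(n, k)$ with $\mathcal{G}(n, k) = \mathcal{R}(n, k) \mathcal{F}(n, k)$ for explicit $q$-polynomials $p_{1}, p_{2}, p_{3}$ and an explicit $q$-rational certificate $\mathcal{R}$. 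Summing over $n \geq 1$ and checking that the right-hand side telescopes to $0$ then yields the homogeneous recurrence~\eqref{frecursion} for $f(k) := \sum_{n \geq 1} F(n, k)$; here one uses that $\left[ \begin{smallmatrix} n - 1 \\ k - 1 \end{smallmatrix} \right]_{q}$ vanishes for $n < k$, so that the lower endpoint contributes nothing, and that the factor $q^{(2 k + m) n}$ forces $\mathcal{G}(n, k) \to 0$ as $n \to \infty$, all series being read as formal power series in $q^{m}$ when $m \notin \mathbb{N}$.

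Next I would solve the recurrence $p_{1}(k) g(k+2) + p_{2}(k) g(k+1) + p_{3}(k) g(k) = 0$ in closed form using the \texttt{RSolve} command in Mathematica, anticipating a general solution $g(k) = c_{1} u_{1}(k) + c_{2} u_{2}(k)$ in which $u_{2}$ is a finite $q$-hypergeometric sum whose summand agrees, up to a $q$-partial-fraction factor, with $(-1)^{j} q^{\frac{j(5 j + 2 m - 1)}{2}} \frac{ \left( q^{m}; q^{2} \right)_{j} \left( q^{m+1}; q^{2} \right)_{j} }{ \left( q; q \right)_{j} }$. To fix $c_{1}$ and $c_{2}$ I would compute the base cases $f(1)$ and $f(2)$ via Euler's identity $\sum_{n \geq 0} z^{n} / \left( q; q \right)_{n} = 1 / \left( z; q \right)_{\infty}$ \cite{Andrews1998}: since $\left[ \begin{smallmatrix} n - 1 \\ 0 \end{smallmatrix} \right]_{q} = 1$ this gives $f(1) = 1 / \left( q^{m+2}; q \right)_{\infty} - 1$, and since $\left[ \begin{smallmatrix} n - 1 \\ 1 \end{smallmatrix} \right]_{q} = (1 - q^{n-1}) / (1 - q)$ it gives $f(2) = -\frac{q}{1-q} \big( 1 / \left( q^{m+4}; q \right)_{\infty} - 1 - q^{-1} \left( 1 / \left( q^{m+5}; q \right)_{\infty} - 1 \right) \big)$, which I would recast using $\left( q^{m}; q \right)_{\infty} = \left( q^{m}; q \right)_{j} \left( q^{m+j}; q \right)_{\infty}$ to expose the common factor $1 / \left( q^{m}; q \right)_{\infty}$. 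Imposing $f(k) = g(k)$ at $k = 1, 2$ then determines $c_{1}$ and $c_{2}$.

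With the constants in hand I would rewrite the finite sum in $g(k)$ through a $q$-partial-fraction decomposition of the same flavour as the one appearing in the proof of Theorem~\ref{naturalreplace}, together with an index shift, so that the $q$-partial-fraction factor combines with $p(j) = 1 - q^{3 j + m} - q^{4 j + m + 1} + q^{6 j + 2 m + 1}$ and the remaining constants reassemble into the term $1$ on the right-hand side of the asserted identity; the claim for $m \in \mathbb{C}$ then follows by analytic continuation in the variable $q^{m}$, since for $|q| < 1$ both sides are analytic in that variable. The step I expect to be the main obstacle is the telescoping: arranging $\mathcal{R}$ in a form for which one can rigorously justify that $\sum_{n \geq 1} \left( \mathcal{G}(n, k+1) - \mathcal{G}(n, k) \right) = 0$ --- in particular ruling out spurious poles of $\mathcal{R}$ at the lower endpoint and confirming the decay as $n \to \infty$ --- and, downstream of that, recognizing the \texttt{RSolve} output as the compact closed form in the statement, which hinges on the correct $q$-partial-fraction identity. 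As a consistency check, letting $k \to \infty$ should reduce the identity to the theta-type evaluation $\sum_{j \geq 0} (-1)^{j} q^{\frac{j(5 j + 2 m - 1)}{2}} \frac{ \left( q^{m}; q^{2} \right)_{j} \left( q^{m+1}; q^{2} \right)_{j} }{ \left( q; q \right)_{j} } p(j) = \left( q^{m}; q \right)_{\infty}$, which one can verify independently.
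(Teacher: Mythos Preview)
Your plan is correct and follows essentially the same route as the paper's own proof: the same $F(n,k)$ and $\mathcal{F}(n,k)=F(k,n)$, the same application of the $q$-Zeilberger algorithm to obtain a second-order recurrence with an explicit certificate, and the same verification via base cases coming from Euler's identity. The paper's written proof is considerably more compressed than yours---it records the explicit $p_1,p_2,p_3$ and $\mathcal{R}$ and then simply asserts that the recursion and base cases hold---whereas you spell out the \texttt{RSolve} step, the explicit evaluations of $f(1)$ and $f(2)$, the $q$-partial-fraction repackaging, and the analytic continuation in $q^{m}$; but these are elaborations of the same argument, not a different one.
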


\begin{proof}
 We set $$ F(n, k) := (-1)^{k-1} \frac{ q^{\binom{k}{2} + (2 k + m) n } }{ \left( q; q \right)_{n} } 
 \left[ \begin{matrix} n - 1 \\ k - 1 \end{matrix} \right]_{q} $$ 
 and $\mathcal{F}(n, k) := F(k, n)$ for a complex parameter $m$. 
 An application of the $q$-Zeilberger algorithm 
 to $\mathcal{F}(n, k)$ produces a second-order difference equation of the desired form, 
 for the $q$-polynomials 

 \ 

\noindent $p_{1}(n) = -q \big( q^{n+1}-1 \big) \big( -q^{m+3 n}-q^{m+4 n+1}+q^{2 m+6 n+1}+1 \big) $ 

 \ 

\noindent and 

 \ 

\noindent $p_{2}(n) = q \big(q^{m+3 n}-q^{2 m+6 n+1}-q^{2 m+7 n+3}-q^{2 m+8 n+5}-q^{2 m+9 n+7}+q^{3 m+9 n+3}+q^{3 m+10 n+5}+q^{3 m+10 n+6}+q^{3 m+11 n+7}+q^{3 m+11
 n+8}+q^{3 m+11 n+9}-q^{4 m+12 n+6}-q^{4 m+13 n+8}-q^{4 m+13 n+9}-q^{4 m+13 n+10}+q^{5 m+15 n+10}+q^{n+1} - 1 \big) $ 

 \ 

\noindent and 

 \ 

\noindent $p_{3}(n) = -q^{m+5 n+3} 
 \big( q^{m+2 n}-1 \big) 
 \big(q^{m+2 n+1}-1 \big) 
 \big( -q^{m+3 n+3}-q^{m+4 n+5}+q^{2 m+6 n+7} + 1 \big)$, 

 \ 

\noindent writing $\mathcal{G}(n, k) = \mathcal{R}(n, k) \mathcal{F}(n, k)$ for the $q$-rational function

 \ 

\noindent $\mathcal{R}(n, k) = \frac{1}{q^n-1} q \big(q^k-1\big) \big(q^n-q^k\big) \big(q^{k+m+3 n+2}+q^{2 k+m+3 n}+q^{2 k+m+3 n+3}+q^{3 k+m+3 n}+q^{3 k+m+3 n+4}+q^{4 k+m+3
 n}+q^{4 k+m+4 n+1}-q^{k+m+5 n+2}-q^{2 k+m+5 n+2}-q^{3 k+m+5 n+2}-q^{k+2 m+6 n+2}-q^{k+2 m+6 n+5}-q^{2 k+2 m+6 n+1}-q^{2 k+2 m+6 n+3} - $

\noindent $ q^{2 k+2 m+6 n+7}-q^{3 k+2 m+6 n+1}-q^{3 k+2 m+6 n+4}-q^{4 k+2 m+6 n+1}+q^{k+2 m+7 n+2} - $

\noindent $ q^{k+2 m+7 n+7}+q^{2 k+2 m+7 n+2}-q^{2 k+2 m+7 n+4}+q^{3 k+2
 m+7 n+2}-q^{3 k+2 m+7 n+5} + $

\noindent $ q^{k+2 m+8 n+5}+q^{k+2 m+9 n+7}+q^{k+3 m+9 n+5}+q^{k+3 m+9 n+9}+q^{2 k+3 m+9 n+4} + $ 

\noindent $ q^{2 k+3 m+9 n+7}+q^{3 k+3 m+9 
 n+5}-q^{k+3 m+10 n+5}+q^{k+3 m+10 n+7}+q^{2 k+3 m+10 n+8}-q^{k+3 m+11 n+7}-q^{k+3 m+11 n+9}-q^{k+4 m+12 n+9}-q^{2 k+4 m+12 n+8}+q^{k+4 m+13
 n+9}+q^{2 k+n+1}+q^{3 k+n+1}-q^{2 k}-q^{3 k}-q^{4 k}+q^{m+4 n+2}-q^{m+5 n+2}-q^{2 m+6 n+3}+q^{2 m+7 n+3}-q^{2 m+7 n+5}+q^{2 m+8 n+5}-q^{2 m+8
 n+7}+q^{2 m+9 n+7}+q^{3 m+9 n+6}-q^{3 m+10 n+6}+q^{3 m+10 n+8}+q^{3 m+10 n+9}-q^{3 m+11 n+8}-q^{3 m+11 n+9}-q^{4 m+12 n+10}+q^{4 m+13
 n+10}\big). $

 \ 

\noindent We thus find that the recursion in \eqref{frecursion} holds for the specified   $q$-polynomials and for $f(n) = \sum_{k = 
  1}^{\infty} F(n, k)$, with the base cases  holding from classically known relations for basic hypergeometric series.  
\end{proof}

 Setting $k \to \infty$ in Theorem \ref{theoreminfinite}, we obtain that 
 $$ \text{ {\footnotesize $ \sum_{j = 0}^{\infty} 
 (-1)^{j} q^{\frac{j(5j+2m-1)}{2}} \frac{ \left[ \begin{matrix} q^{m}, q^{m+1} 
 \end{matrix} \, \big| \, q^2 \right]_{j} }{ \left( q;q \right)_{j} } 
 \big( 1 -q^{3 j+m}-q^{4 j+m+1}+q^{6 j + 2 m + 1} \big) 
 = \left( q^{m};q \right)_{\infty},$ } } $$ writing 
\begin{equation*}
 \left[ \begin{matrix} \alpha, \beta, \ldots, \gamma 
 \end{matrix} \, \big| \, q \right]_{n} 
 = \left( \alpha; q \right)_{n} 
 \left( \beta; q \right)_{n} \cdots \left( \gamma; q \right)_{n}.
\end{equation*}
 A similar approach, as in our proof of Theorem \ref{theoreminfinite}, 
 allows us to extend the Andrews--Merca identity, 
 by obtaining explicit, finite $q$-summations 
 for \eqref{mainobject} if $\rho(k)$ is linear with a fixed leading term. 
 In this direction, 
 for the $\rho(k) = 3 k +m$, for a free, complex parameter $m$, 
 and for the $q$-polynomial 

 \ 

\noindent $p(j) = 1 -q^{4 j+m}-q^{5 j+m+1}-q^{6 j+m+2}+q^{8 j+2 m+1} + q^{9 j+2 m+2} + q^{9 j+2 m+3}-q^{12 j+3 m+3}$, 

 \ 

\noindent we obtain that 
\begin{multline*}
 \frac{ 1 }{ \left( q^{m};q \right)_{\infty} } 
 \sum_{j = 0}^{k - 1} 
 (-1)^{j} q^{ \frac{j (7 j+2 m-1)}{2} } 
 \frac{ \left[ \begin{matrix} q^{m}, q^{m+1}, q^{m+2} \end{matrix} \, \big| \, q^3 \right]_{j} }{ \left( q;q \right)_{j} } 
 \, p(j) = \\ 
 1 + (-1)^{k-1} \sum_{n = 1}^{\infty} \frac{ q^{\binom{k}{2} + (3 k + m) n } }{ \left( q; q \right)_{n} } 
 \left[ \begin{matrix} n - 1 \\ k - 1 \end{matrix} \right]_{q}. 
\end{multline*}
 Similarly, for 

 \ 

\noindent $p(j) = 1 -q^{5 j+m} - 
 q^{6 j+m+1}-q^{7 j+m+2} - 
 q^{8 j+m+3}+q^{10 j+2 m+1} + 
 q^{11 j+2 m+2} +q^{11 j+2 m+3} + 
 q^{12 j+2 m+3}+q^{12 j+2 m+4}+q^{12 j+2 m+5}-q^{15 
 j+3 m+3}-q^{16 j+3 m+4} - $ 

\noindent $q^{16 j+3 m+5}-q^{16 j+3 m+6}+q^{20 j+4 m+6}$, 

 \ 

\noindent we obtain the truncation identity such that 
\begin{multline*}
 \frac{ 1 }{ \left( q^{m};q \right)_{\infty} } 
 \sum_{j = 0}^{k - 1} 
 (-1)^{j} q^{ \frac{j (9 j+2 m-1)}{2} } 
 \frac{ \left[ \begin{matrix} q^{m}, q^{m+1}, q^{m+2}, q^{m+3} 
 \end{matrix} \, \big| \, q^4 \right]_{j} }{ \left( q;q \right)_{j} } 
 \, p(j) = \\ 
 1 + (-1)^{k-1} \sum_{n = 1}^{\infty} \frac{ q^{\binom{k}{2} + (4 k + m) n } }{ \left( q; q \right)_{n} } 
 \left[ \begin{matrix} n - 1 \\ k - 1 \end{matrix} \right]_{q}. 
\end{multline*}

\section{Conclusion}
 Our method is broadly applicable when it comes to proving and generating truncation identities for $q$-series, and we encourage 
 further explorations of this method. Also, we leave it to a separate project to obtain and apply combinatorial interpretations based on 
 truncation identities obtained from our method. 

\subsection*{Acknowledgements}
 The author was supported through a Killam Postdoctoral Fellowship from the Killam Trusts and thanks Karl Dilcher, Lin Jiu, and Shane 
 Chern for useful comments related to this paper.

 \ 

{\textsc{John M. Campbell}} 

\vspace{0.1in}

Department of Mathematics and Statistics 

Dalhousie University 

6299 South St, Halifax, NS B3H 4R2

\vspace{0.1in}

{\tt jh241966@dal.ca}

\end{document}